\newtheorem{Theorem}{Theorem}[section]
\theoremstyle{definition}
\theoremstyle{remark}
\newtheorem*{Remark}{Remark}
\numberwithin{equation}{section}
\begin{document}

\title{Stanley--Elder--Fine theorems for colored partitions}

\author[H.~S.~Bal]{Hartosh Singh Bal
}
\address{The Caravan\\
Jhandewalan Extn., New Delhi 110001, India}
\email{hartoshbal@gmail.com}

\author[G.~Bhatnagar]{Gaurav Bhatnagar
}
\address{Ashoka University, Sonipat, Haryana, India}
\email{bhatnagarg@gmail.com}
\urladdr{http://www.gbhatnagar.com}

\date{\today}

\keywords{Integer partitions, Stanley's theorem, Elder's theorem, colored partitions, prefabs, partitions with $k$ colors of $k$}
\subjclass[2010]{Primary: 11P81; Secondary: 05A17}

\begin{abstract}
We give a new proof of a partition theorem popularly known as Elder's theorem, but which is also credited to Stanley and Fine. We extend the theorem to the context of colored partitions (or prefabs). 
More specifically, we give analogous results for $b$-colored partitions, where each part occurs in $b$ colors; for $b$-colored partitions with odd parts (or distinct parts); for partitions where the part $k$ comes in $k$ colors; and, overpartitions.
\end{abstract}

\maketitle

\section{Introduction}
The purpose of this paper is to extend a charming theorem in the theory of partitions which appeared in Stanley \cite[Ch~1, Ex.~ 80]{Stanley2012}, but is usually attributed to Elder, and  more recently, has been found in the work of Fine; see Gilbert \cite{Gilbert2015} for a comprehensive history. Our extensions appears in the context of colored partitions (or prefabs). As a consequence, we give analogous results for partitions where the part $k$ occurs in $b$ colors, and partitions where the part $k$ appears in $k$ colors. We also consider 
colored partitions with odd or distinct parts and overpartitions.  
In the process, we extend results of  Andrews and Merca~\cite{AM2020} and Gilbert~\cite{Gilbert2015}. 

We recall some of the terminology from the theory of integer partitions.
A partition of $n$ is a way of writing $n$ as an unordered sum of numbers. 
It is represented as a sequence of non-increasing, positive integers $$\lambda: \lambda_1 \ge \lambda_2\ge \lambda_3 \ge \dots ,$$
with 
$$n=\lambda_1+\lambda_2+\lambda_3+\cdots.$$
The symbol
$\lambda \vdash n$  
is used to say that $\lambda$ is a partition of $n$; we also say $\lambda$ has weight $n$. 
If $\lambda \vdash n$, we have
\begin{equation*}\label{freq1}
n=\sum_k k f_k
\end{equation*}
where the non-negative integers $f_k=f_k(\lambda)$ denote the frequency of $k$ in $\lambda$, that is, the number of times $k$ comes in $\lambda$. 
For example, the partition $4+3+3+2+1+1+1+1$ has frequencies: 
$f_1=4, f_2=1, f_3=2, f_4=1$. 

One of the quantities in the Stanley--Elder--Fine theorem is
$$F_k(n) := \sum_{\lambda \vdash n } f_k(\lambda),$$
the total number of $k$'s appearing in all the partitions of $n$. 
The other is
$$G_k(n) = \sum_{\lambda \vdash n } g_k(\lambda),$$
where
$$g_k(\lambda):=  \text{the number of parts which appear at-least $k$ times in $\lambda$.}
$$

The Stanley--Elder--Fine theorem says that for all $n$,
\begin{equation}\label{SEF}
F_k(n) = G_k(n),
\end{equation}
for $k=1, 2, \dots, n$. 

There are several proofs of this result, many of a combinatorial nature (see \cite{Gilbert2015} for references). Here is another, rather simple, combinatorial proof of \eqref{SEF}. 
We first show
\begin{equation}\label{Fk-partitions}
F_k(n)=p(n-k)+p(n-2k)+p(n-3k)+\cdots,
\end{equation}
for $k=1, 2, \dots $. (We take $p(m)=0$ for $m<0$.) 
Observe that for $k=1, 2, \dots, n$,
\begin{equation*}
F_k(n)=p(n-k)+F_k(n-k),
\end{equation*}
because adding $k$ to each partition of $n-k$ yields a partition of $n$, and vice-versa, on deletion of $k$ from any partition containing $k$ as a part, we obtain a partition of $n-k$.  
This gives \eqref{Fk-partitions} by iteration.
 
Next, consider $G_k(n)$.
%
If we add $1+1+\cdots+1$ ($k$-times) to any partition of $n-k$, we obtain a partition of 
$n$ where $1$ appears as a part at-least $k$ times; 
if we add $2+2+\cdots+2$ ($k$-times) to any partition of $n-2k$, we obtain a partition of $n$ where $2$ 
appears as a part at-least $k$ times; and so on.  The process is reversible. Thus 
\begin{equation}\label{G-k}
G_k(n)=p(n-k)+p(n-2k)+p(n-3k)+\cdots
\end{equation}
for  $k=1, 2, 3, \dots$; and $F_k(n)=G_k(n)$ for all $k=1, 2, \dots, n$. 

The ``counting by rows = counting by columns'' quality of the Stanley--Elder--Fine theorem is reflected in this proof. 

The objective of this paper is to extend this proof  to colored partitions generated by  
$$\prod_{k=1}^\infty\frac{1}{\left(1-q^k\right)^{b_k}},$$
where $b_k$ is a sequence of non-negative numbers. 
These are called prefabs by Wilf \cite[\S 3.14]{wilf2006}, but we prefer the imagery of partitions with colored parts. Each part $k$ comes in $b_k$ colors. They reduce to ordinary partitions when $b_k=1$ for all $k$. We are able to extend \eqref{SEF} to the cases $b_k=b$ (a positive number) and when $b_k=k$ for all $k$. In addition, we consider partition objects from generating functions that are products of such products; in particular, we consider overpartitions. 

This paper is organized as follows. In \S\ref{sec:frequency}, we prove an analogue of
\eqref{Fk-partitions} for colored partitions. In \S\ref{sec:colored} we consider $b$-colored partitions, where each part has $b$ colors. Next, in \S\ref{sec:odd}, we consider $b$-colored partitions with odd or distinct parts.  In \S\ref{sec:plane} we consider partitions where the part $k$ comes in $k$ colors. The number of such partitions is the same as the number of plane partitions of $n$. Next, in \S\ref{sec:overpartitions} we consider overpartitions. We conclude in \S\ref{sec:credits} by giving credit where credit is due. 


\section{The frequency function for colored partitions}\label{sec:frequency}
The objective of this section is to obtain a key relation for the frequency function for colored partitions.  We use notation from \cite{BB2021a} to represent colored partitions.  

Let  $u_k$ represent a set containing $b_k$ copies of $k$. The elements of $u_k$ are represented as $k_1, k_2, \dots$. The elements of $u_k$ can be regarded as $k$ with different colors. 
Consider two sets $u_j$ and $u_k$, containing, respectively, $b_j$ and $b_k$ colors.
 We now use the symbol
$u_j+u_k$ to denote the set of partitions
$$u_j+u_k:=\{j_a+k_b: j_a\in u_j, k_b\in u_k\}.$$
Here $2u_j$ represents $u_j+u_j$. 
This definition is extended by induction to finite sums $\sum a_i u_i$ where $a_i\geq 0$. 
By a colored partition of weight $n$, we mean an element $\pi$ where
$$\pi\in \sum_i a_i u_i , $$
with $|\pi|=\sum_{i=1}^n i a_i = n.$

For a partition $\pi$, let $f_{k_c}(\pi)$ be the number of parts equal to $k_c$ in $\pi$, i.e., the frequency of 
$k_c$ in $\pi$. Then the frequency of $k$ in $\pi$ is
$$f_k(\pi) = \sum_{k_c\in x_k} f_{k_c}(\pi).$$
We denote the sum of the frequencies of all partitions of size $n$  by $F_k(n)$.

%
%

For a partition $\pi$, let $f_{k_c}(\pi)$ be the number of parts equal to $k_c$ in $\pi$, i.e., the frequency of 
$k_c$ in $\pi$. Then the frequency of $k$ in $\pi$ is
$$f_k(\pi) = \sum_{k_c\in x_k} f_{k_c}(\pi).$$
We denote the sum of the frequencies of all partitions of size $n$  by $F_k(n)$.

\begin{Theorem}\label{prefab-freq-matrix}
Let $h(n)$ represent the number of colored partitions of size $n$ where $k$ comes in $b_k$ colors. Let $F_k(n)$ be the frequency of $k$ in all the partitions of $n$. Then we have the recurrence relation
\begin{equation}
F_k(n) = b_k\left(h(n-k)+h(n-2k)+h(n-3k)+\cdots \right)\label{recurrence-2}
\end{equation}
\end{Theorem}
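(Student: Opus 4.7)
The plan is to mimic the two-line proof of \eqref{Fk-partitions} given in the introduction, but to first decompose the frequency function by color and then aggregate at the end. The key observation is that, by the definition of $f_k(\pi)$ as a sum of single-color frequencies, we can write
\begin{equation*}
F_k(n) = \sum_{\pi} f_k(\pi) = \sum_{k_c \in u_k} \sum_{\pi} f_{k_c}(\pi) =: \sum_{k_c \in u_k} F_{k_c}(n),
\end{equation*}
where the outer sum has exactly $b_k$ terms, one per color of $k$.

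Next, I would fix a single color $k_c$ and establish the one-color analogue of the recurrence used in the introduction, namely
\begin{equation*}
F_{k_c}(n) = h(n-k) + F_{k_c}(n-k).
\end{equation*}
The bijection is the obvious one adapted to the colored setting: given any colored partition $\pi'$ of $n-k$, adjoin one copy of the specific colored part $k_c$ to obtain a colored partition $\pi$ of $n$ containing at least one part $k_c$; conversely, from any colored partition of $n$ containing $k_c$ as a part, delete one $k_c$ to obtain a colored partition of $n-k$. This map is reversible and accounts for exactly one occurrence of $k_c$ in each partition of $n$ that contains $k_c$, so summing over all partitions of $n$ gives the stated recurrence. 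Iterating yields
\begin{equation*}
F_{k_c}(n) = h(n-k) + h(n-2k) + h(n-3k) + \cdots,
\end{equation*}
with the convention $h(m)=0$ for $m<0$.

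Finally, summing over the $b_k$ colors $k_c \in u_k$ and noting that the right-hand side does not depend on the specific color $c$ gives
\begin{equation*}
F_k(n) = \sum_{k_c \in u_k} F_{k_c}(n) = b_k \bigl( h(n-k) + h(n-2k) + h(n-3k) + \cdots \bigr),
\end{equation*}
which is \eqref{recurrence-2}. I do not anticipate a real obstacle here; the only subtlety is bookkeeping, namely making sure that the add/delete bijection is performed for one fixed color at a time so that $f_{k_c}$ (rather than $f_k$) is the quantity tracked, and that the factor $b_k$ emerges only at the aggregation step. This color-by-color splitting is precisely the feature of the prefab setting that the uncolored proof does not need to address.
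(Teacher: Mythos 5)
Your proof is correct, but it takes a genuinely different route from the paper's. You refine the frequency function by color, writing $F_k(n)=\sum_{k_c\in u_k}F_{k_c}(n)$, and for each fixed color $k_c$ you run the classical add-a-part/delete-a-part bijection of the introduction to get $F_{k_c}(n)=h(n-k)+F_{k_c}(n-k)$; the factor $b_k$ then emerges only when you sum over the $b_k$ colors. The paper instead treats all colors of $k$ at once: it groups the colored partitions of $n$ according to the multiplicity $c_k$ of the part $k$ in any color, counts each group with the multiset coefficient $\binom{c_k+b_k-1}{b_k-1}$, and uses the binomial identity $(n+1)\binom{n+k}{k-1}=k\binom{n+k-1}{k-1}+n\binom{n+k-1}{k-1}$ to split the contribution $c_k\binom{c_k+b_k-1}{b_k-1}\,|Y|$ into a piece giving $F_k(n-k)$ and a piece giving $b_k h(n-k)$, arriving at the same recurrence $F_k(n)=F_k(n-k)+b_k h(n-k)$. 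Your bijective argument is arguably cleaner and closer in spirit to the introduction's proof of \eqref{Fk-partitions}; what the paper's algebraic version buys is that its key step is a polynomial identity in $b_k$, which is what permits the generalization to non-integer (indeed complex) $b_k$ mentioned in the Remark and carried out in \cite{BB2021a}, whereas your color-by-color decomposition intrinsically requires $b_k$ to be a non-negative integer.
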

\begin{Remark} Here $b_k$ is a sequence of non-negative integers. A more general version of this theorem, where the $b_k$ are complex numbers, is proved in \cite{BB2021a}.
\end{Remark}
\begin{proof} We prove
\begin{equation}
F_k(n-k)+b_k h(n-k). \label{recurrence-partitions}
\end{equation}
For $X=\sum_j c_j u_j$, let $|X|$ denote the number of partitions in $X$, and let $F_k(X)$ denote the number of $k$'s in $X$. 
Consider a set of partitions of weight $n$ represented by $X=c_k u_k+Y$, where $Y=\sum_{j\neq k} c_j x_j$. Here $c_k>0$.  Now the number of partitions in $X$ are 
$$\binom{c_k+|u_k|-1}{|u_k|-1} |Y|=\binom{c_k+b_k-1}{b_k-1} |Y|.$$
We relate $X$ with partitions obtained by deleting one $k$. 

 The contribution to $F_k(n)$ from $X$ is
 $$F_k(X)=c_k\binom{c_k+b_k-1}{b_k-1} |Y|.$$
Using the elementary identity 
$$(n+1)\binom{n+k}{k-1} = k\binom{n+k-1}{k-1}+n\binom{n+k-1}{k-1}$$
we find that
$$F_k(X) =  (c_k-1)\binom{c_k-1+b_k-1}{b_k-1} |Y|+ b_k \binom{c_k-1+b_k-1}{b_k-1} |Y|.$$

Now the first of the two terms on the right is $F_k(X-u_k)$, the number of $k$'s in $X-u_k$. Thus on summing over all $X$ that contains $u_k$, we obtain $F_k(n-k)$. 
In the second term, the quantity 
$$\binom{c_k-1+b_k-1}{b_k-1} |Y|$$
is the number of partitions in $X-u_k$. The weight of each partition is $n-k$. Summing over all the partitions $X$ of weight $n$ which contain $u_k$, we obtain $h(n-k)$. 
This shows \eqref{recurrence-partitions}.
\end{proof}

\section{Partitions with the same number of colors for each part}\label{sec:colored}
We consider $b$-colored partitions where each part $k$ comes in $b$ colors, where $b$ is a positive integer. A Stanley--Elder--Fine theorem is as easy to obtain in this context as the 
$b=1$ case.  Let $h(n)$ represent the number of $b$-colored partitions of $n$. 
Let $k_1$, $k_2$, $\dots$, $k_b$ represent the colored parts. 
Let $F_k(n)$ be the frequency of $k$ in all the partitions of $n$. 
From \eqref{recurrence-2}, we have 
\begin{equation}\label{SEF-partitions}
F_k(n)=b h(n-k)+b h(n-2k)+b h(n-3k)+\cdots.
\end{equation}
Let $\pi$ be a partition. As before, let
$$g_{k}(\pi):=  \text{the number of parts which appear at-least $k$ times in $\pi$,}
$$
and 
$$G_k(n) = \sum_{|\pi| = n } g_k(\pi).$$
Thus $G_k(n)$ is the number of times a part appears at-least $k$ times in a partition, summed over all the partitions of $n$. Then we have
\begin{Theorem}\label{th:SEF} Let $F_k(n)$ and $G_k(n)$ be as above. Then, 
for all $n=1, 2, \dots$,
$$F_k(n) = G_k(n),$$
for $k=1, 2, \dots, n$. 
\end{Theorem}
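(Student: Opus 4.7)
My plan is to show that the expression already derived for $F_k(n)$ in Theorem~\ref{prefab-freq-matrix} (specialized to $b_k = b$), namely the right-hand side of \eqref{SEF-partitions}, is also a formula for $G_k(n)$. Once both sides are written as $b\bigl(h(n-k) + h(n-2k) + \cdots\bigr)$, the theorem is immediate.

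To compute $G_k(n)$ I would reinterpret it as the count of pairs $(\pi, j_c)$, where $\pi$ is a $b$-colored partition of $n$ and $j_c$ is a particular colored part that occurs at least $k$ times in $\pi$. The convention here is that distinct colors of the same integer are distinct parts, so that, for instance, $1_1$ and $1_2$ can each independently contribute to $g_k(\pi)$. With this reading, the main step is the bijection, for each fixed integer $j \geq 1$ and each color $c \in \{1, \dots, b\}$, between the $b$-colored partitions of $n$ in which $j_c$ occurs at least $k$ times and the arbitrary $b$-colored partitions of $n - jk$; the map is ``delete $k$ copies of $j_c$'' and its inverse is ``append $k$ copies of $j_c$''. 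For each such $(j, c)$ this accounts for exactly $h(n - jk)$ pairs.

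Summing over the $b$ admissible colors for each $j$ and then over all $j \geq 1$ with $jk \leq n$, and using the convention $h(m) = 0$ for $m < 0$, yields
$$G_k(n) = b \bigl( h(n-k) + h(n-2k) + h(n-3k) + \cdots \bigr),$$
which by \eqref{SEF-partitions} equals $F_k(n)$.

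The single point requiring care is the ``parts versus colored parts'' convention above; once that is pinned down, the argument is essentially the same ``count by rows = count by columns'' observation used for \eqref{G-k} in the introduction, and there is no substantial technical obstacle. The proof is a direct colored analogue of the two-step recurrence plus delete/append bijection used in the classical $b=1$ case.
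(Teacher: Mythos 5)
Your proposal is correct and follows essentially the same route as the paper: the paper also establishes $G_k(n)=b\bigl(h(n-k)+h(n-2k)+\cdots\bigr)$ via the append/delete-$k$-copies-of-$j_c$ bijection (summed over the $b$ colors and all $j$) and then invokes \eqref{SEF-partitions}. Your explicit remark that $g_k(\pi)$ counts \emph{colored} parts is the same convention the paper uses implicitly, so there is no discrepancy.
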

\begin{proof} The proof is virtually the same as the $b=1$ case. 
Note that if we add a $r$ (of any color) to any partition of $n-r$, we obtain a partition of $n$ which has at-least 
one $r$ as a part. We can add any one of the $b$ $r'$s. This can be done for each 
$r=1, 2, 3, \dots$. Thus
$$G_1(n) = bh(n-1)+bh(n-2)+\cdots + bh(0),$$
since every part from every partition of $n$ which is repeated at-least once will be accounted
 for (uniquely) in this way. 

In general, we see that if we add $1_c+1_c+\cdots+1_c$ ($k$-times) to any partition of $n-k$, we obtain a partition of 
$n$ where $1_c$ appears as a part at-least $k$ times; 
if we add $2_c+2_c+\cdots+2_c$ ($k$-times) to any partition of $n-2k$, we obtain a partition of $n$ where $2_c$ 
appears as a part at-least $k$ times; and so on.  Thus 
$$G_k(n)=bh(n-k)+bh(n-2k)+bh(n-3k)+\cdots$$
for  $k=1, 2, 3, \dots$.
In view of \eqref{SEF-partitions}, $G_k(n)$ and $F_k(n)$ are equal. 
\end{proof}

Next, we consider the quantity $H_k(n)$, defined as the sum of parts divisible by $k$, counted without multiplicity, in all the $b$-colored partitions of $n$. 
As an example, consider an ordinary partition (that is, $b=1$) represented by $2u_3+4u_6$
or $3+3+6+6+6+6$. This contributes $3+6=9$ to the sum.
On the other hand, a 2-colored partition $3_1+3_2+6_1+6_1+6_1+6_2\in 2u_3+4u_6$ contributes $3+3+6+6=18$ to $H_3(30)$.
\begin{Theorem}
Let $F_k(n)$ and $H_k(n)$ be as above. Then for all $n$, we have
$$kF_k(n)=H_k(n)-H_k(n-k).$$
\end{Theorem}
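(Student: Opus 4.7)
The plan is to obtain closed forms for both $H_k(n)$ and $H_k(n-k)$ as weighted sums of the values $h(n-jk)$, and then see that their difference telescopes into the right-hand side of the formula \eqref{SEF-partitions} for $F_k(n)$ from Theorem~\ref{prefab-freq-matrix}.

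First I would rewrite $H_k(n)$ as a sum over distinct colored multiples of $k$. For each $j\ge 1$ and each color $c\in\{1,\dots,b\}$, the colored part $(jk)_c$ contributes $jk$ to $H_k(n)$ once for every partition of $n$ in which $(jk)_c$ appears. By the standard ``remove one copy'' bijection, the number of $b$-colored partitions of $n$ containing $(jk)_c$ at least once is $h(n-jk)$. Summing over the $b$ available colors and all $j\ge 1$, this gives
\begin{equation*}
H_k(n) \;=\; bk\sum_{j\ge 1} j\,h(n-jk).
\end{equation*}

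Next, I would apply the same formula to $n-k$ and reindex:
\begin{equation*}
H_k(n-k) \;=\; bk\sum_{j\ge 1} j\,h(n-(j+1)k) \;=\; bk\sum_{i\ge 2}(i-1)\,h(n-ik).
\end{equation*}
Subtracting, the coefficient of $h(n-jk)$ on the right is $1$ for $j=1$ and $j-(j-1)=1$ for $j\ge 2$, so
\begin{equation*}
H_k(n)-H_k(n-k) \;=\; bk\sum_{j\ge 1} h(n-jk) \;=\; k\cdot b\bigl(h(n-k)+h(n-2k)+\cdots\bigr),
\end{equation*}
which by \eqref{SEF-partitions} equals $kF_k(n)$, as desired.

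Since both ingredients are elementary, the only real subtlety is pinning down the ``without multiplicity'' convention in the colored setting. Once we agree that distinct colors count as distinct parts (as in the author's example where $3_1+3_2+6_1+6_1+6_1+6_2$ contributes $3+3+6+6$), the counting of partitions that contain a specified colored part $(jk)_c$ is the same ``add/remove one copy'' bijection used to establish \eqref{Fk-partitions}, and the factor of $b$ appears for exactly the same reason it does in \eqref{SEF-partitions}. No further machinery is needed.
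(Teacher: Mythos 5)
Your proposal is correct and follows essentially the same route as the paper: you establish the expansion $H_k(n)=bk\,h(n-k)+2bk\,h(n-2k)+\cdots$ by the same add/remove-one-copy bijection, and then take the difference $H_k(n)-H_k(n-k)$ and compare with \eqref{SEF-partitions}. The only difference is that you write out the telescoping of the difference explicitly, which the paper leaves as ``it is clear.''
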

\begin{Remark} When the number of colors $b=1$, i.e., in the case of ordinary partitions, this result reduces to a result of Andrews and Merca \cite{AM2020}. 
\end{Remark}
\begin{proof}
We first show
\begin{equation}\label{H-recursion}
H_k(n)=bkh(n-k)+2bkh(n-2k)+3bkh(n-3k)+\cdots.
\end{equation}
The argument is similar to the one for $G_k(n)$. If we add $(rk)_c$ (the part $rk$ in color $c$) to any partition of $n-rk$, we obtain a partition with $(rk)_c$ as a part. This contributes $rk$ to $H_k(n)$. Conversely, if we delete $(rk)_c$ in a partition of $n$ where it comes as a part, we obtain a partition of $n-rk$. 
This shows \eqref{H-recursion}. 

Now it is clear that 
$H_k(n)-H_k(n-k)$ 
equals $kF_k(n)$ by \eqref{SEF-partitions}. 
\end{proof}

\section{Partitions with odd and distinct parts}\label{sec:odd}
We consider  $b$-colored  partitions with all parts odd (which all come in $b$-colors).  Let $h(n)$ now denote the number of $b$-colored partitions with only odd parts. An easy extension of Euler's ODD=DISTINCT theorem (see \cite[eq.~(2.1)]{AE2004}) says that $h(n)$ is also the number of $b$-colored partitions with distinct parts.
Here the $h(n)$ are generated by
$$\prod_{k=1}^\infty {\big(1+q^k\big)^b}=\prod_{k=1}^\infty \frac{1}{\big(1-q^{2k-1}\big)^b}.$$

Let $F^o_k(n)$ denote the corresponding frequency function. Then we have
\begin{subequations}
\begin{align}
 F^o_k(n) &= \label{freq-odd-h-a}
\begin{cases} 
 F^o(n-k)+bh(n-k), 
& \text{ if  $k$ is odd;} \\
 0, & \text{ if  $k$ is even.} 
 \end{cases} \\
&= \label{freq-odd-h-b}
 \begin{cases} 
 h(n-k)+h(n-2k) + h(n-3k) + \cdots, 
& \text{ if  $k$ is odd;} \\
 0, & \text{ if  $k$ is even.} 
 \end{cases}
\end{align}
\end{subequations}
Let $G_k^o(n)$ be the number of times a part appears at-least $k$ times in a partition, summed over all the $b$-colored partitions of $n$ with odd parts.

\begin{Theorem} Let $F_k^o(n)$ and $G_k^o(n)$ be as above and let $k$ be an odd number. Then, 
for all $n=1, 2, \dots$,
$$F_k^o(n) = G_k^o(n)+G_k^o(n-k),$$
for $k=1, 3, 5, \dots $. 
\end{Theorem}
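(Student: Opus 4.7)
The plan is to run the same combinatorial argument used in the proof of Theorem 3.1, adapted to odd parts, and then exploit the parity of $k$ to combine two translates of $G_k^o$ into the full sum for $F_k^o$.

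First I would establish the recurrence
$$G_k^o(m) = b\bigl[h(m-k) + h(m-3k) + h(m-5k) + \cdots\bigr]$$
for every $m \geq 0$, by the obvious ``add $k$ copies of a part'' bijection. A colored part $r_c$ (in one of the $b$ available colors) appears at least $k$ times in some odd-parts partition $\pi$ of $m$ if and only if deleting $k$ of those copies leaves a $b$-colored partition with odd parts of weight $m - rk$. Since all parts of $\pi$ must be odd, the integer $r$ is forced to be odd, so only the terms $r = 1, 3, 5, \dots$ occur. For each such $r$ and each of the $b$ color choices $c$, we get $h(m - rk)$ contributions, and the correspondence is a bijection. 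This is the odd-parts analogue of the recurrence for $G_k(n)$ in Section~\ref{sec:colored}.

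Second, because $k$ is odd, adding $k$ to each index converts odd multiples of $k$ into even ones, giving
$$G_k^o(n-k) = b\bigl[h(n-2k) + h(n-4k) + h(n-6k) + \cdots\bigr].$$
Summing the two expressions yields
$$G_k^o(n) + G_k^o(n-k) = b\sum_{j \geq 1} h(n - jk),$$
which by \eqref{freq-odd-h-b} equals $F_k^o(n)$.

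The only subtle point, and therefore the ``main obstacle'' such as it is, is the parity bookkeeping: unlike the unrestricted case of Theorem~\ref{th:SEF}, a single application of the ``add $k$ copies'' bijection recovers only the odd-indexed half of the sum representing $F_k^o(n)$, because even-weight parts are forbidden. Shifting by the odd quantity $k$ in the second copy $G_k^o(n-k)$ is exactly what fills in the missing even multiples $h(n-2k), h(n-4k), \dots$, and this is the reason the result takes the form $F_k^o(n) = G_k^o(n) + G_k^o(n-k)$ rather than a single equality of the Stanley--Elder--Fine type.
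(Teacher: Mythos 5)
Your proposal is correct and follows essentially the same route as the paper: establish $G_k^o(n)=b\left(h(n-k)+h(n-3k)+h(n-5k)+\cdots\right)$ by the ``add $k$ copies'' bijection, then observe that the shift by $k$ supplies the even-indexed terms so that the sum matches the expansion of $F_k^o(n)$. The only nitpick is that the index shift $n-k-rk=n-(r+1)k$ turns odd multiples into even ones regardless of the parity of $k$; the oddness of $k$ is really needed so that $F_k^o(n)$ is nonzero and equals $b\sum_{j\geq 1}h(n-jk)$ in the first place.
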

\begin{Remark} In the case of ordinary partitions, where $b=1$, this theorem reduces to an observation of 
Gilbert \cite[Th.\ 8]{Gilbert2015}.
\end{Remark}
\begin{proof}
If we add $1_c+1_c+\cdots+1_c$ ($k$-times) to any partition of $n-k$, we obtain a partition of 
$n$ where $1_c$ appears as a part at-least $k$ times; 
if we add $3_c+3_c+\cdots+3_c$ ($k$-times) to any partition of $n-3k$, we obtain a partition of $n$ where $3_c$ 
appears as a part at-least $k$ times; and so on.  Thus 
\begin{equation}\label{Gk-odd-h}
G_k^o(n)=bh(n-k)+bh(n-3k)+bh(n-5k)+\cdots
\end{equation}
for  $k=1, 2, 3, \dots$. Note that this applies even if $k$ is not an odd number. 

When $k$ is odd, we see from  \eqref{freq-odd-h-b} that
$$G_k^o+G_k^o(n-k) = F_k^o(n),$$
This proves the theorem. 
\end{proof}
Next let $F_k^d(n)$ denote the frequency of $k$ in $b$-colored partitions with all parts distinct. The differently colored parts of the same weight are considered distinct. For example, $3_1+3_2+5_2$ is  considered a $2$-partition of $11$ with distinct parts.
This partition contributes $2$ to $F_3^d(11)$. 
It is easy to see that 
\begin{subequations}
\begin{align}
F_k^d (n) &= b h(n-k)-F_k^d(n-k) \label{F-distinct-h-a} \\
& = b h(n-k)-bh(n-2k)+bh(n-3k)-bh(n-4k)+\cdots .\label{F-distinct-h-b}
\end{align}
\end{subequations}
To obtain \eqref{F-distinct-h-a}, observe that a $b$-colored partition of $n$ (with distinct parts) which contains $k_c$ as a part is obtained 
by adding $k_c$ to any partition of $n-k$ which does not have $k_c$ as a part. So the number of 
distinct partitions of $n$ containing $k_c$ is  $h(n-k)-F_{k_c}(n)$, where $F_{k_c}(n)$ is number of $k_c$'s in the distinct partitions of $n-k$ (and also the number of distinct partitions containing $k_c$ as a part). Now summing over all colors, we obtain \eqref{F-distinct-h-a}.

Since the number $h(n)$ of  $b$-colored partitions with odd parts and distinct parts are the same, equation \eqref{F-distinct-h-b}, 
along with \eqref{Gk-odd-h} immediately yields the following theorem. 
\begin{Theorem}\label{th:d-o} Let $F_k^d(n)$ and $G_k^o(n)$ be as defined above. Then
$$F_k^d(n) = G_k^o(n)-G_k^o(n-k),$$
for $k=1, 2, 3, \dots$.
\end{Theorem}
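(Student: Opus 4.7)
The plan is to prove this by directly comparing the two explicit expansions already established just above. Equation \eqref{F-distinct-h-b} expresses $F_k^d(n)$ as an alternating series in $h(n-jk)$, while equation \eqref{Gk-odd-h} expresses $G_k^o(n)$ as a sum over odd multiples $h(n-(2j-1)k)$. The identification hinges on the observation, emphasized in the text, that the $h(n)$ appearing in both formulas is the \emph{same} sequence, because $b$-colored partitions into odd parts are equinumerous with $b$-colored partitions into distinct parts (Euler's odd-equals-distinct, in its $b$-colored form).

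First I would record, from \eqref{F-distinct-h-b},
$$F_k^d(n) = b h(n-k) - b h(n-2k) + b h(n-3k) - b h(n-4k) + \cdots.$$
Next I would substitute $n \mapsto n-k$ in \eqref{Gk-odd-h} to obtain
$$G_k^o(n-k) = b h(n-2k) + b h(n-4k) + b h(n-6k) + \cdots,$$
so that the odd-multiple terms comprising $G_k^o(n) = b h(n-k) + b h(n-3k) + b h(n-5k) + \cdots$ supply the positive contributions and $G_k^o(n-k)$ supplies the negated even-multiple contributions. Subtracting, the two series interleave to reproduce termwise the alternating sum for $F_k^d(n)$, which is exactly the asserted identity.

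The main ``obstacle'' here is purely clerical: one must check that the index shift $n \mapsto n-k$ sends the odd-multiple list $\{(2j-1)k : j \ge 1\}$ onto the even-multiple list $\{2jk : j \ge 1\}$, and that each series terminates in finitely many nonzero terms under the convention $h(m)=0$ for $m<0$. Once the indexing is aligned, the identity reduces to a termwise comparison. I note also that the argument applies for every positive integer $k$, not merely odd $k$, since \eqref{Gk-odd-h} was observed to hold for all $k$ in the preceding proof.
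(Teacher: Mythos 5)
Your proposal is correct and is essentially the paper's own argument: the paper proves the theorem by noting that since the $h(n)$ counting $b$-colored odd-part partitions equals that for distinct-part partitions, equation \eqref{F-distinct-h-b} combined with \eqref{Gk-odd-h} (shifted by $n\mapsto n-k$) immediately gives $F_k^d(n)=G_k^o(n)-G_k^o(n-k)$. Your termwise interleaving of the two series, and your remark that \eqref{Gk-odd-h} holds for all $k$, match the paper's reasoning exactly.
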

\begin{Remark} When $b=1$, Theorem~\ref{th:d-o} reduces to Gilbert \cite[Th.\ 9]{Gilbert2015}.
\end{Remark}

\section{Partitions with $k$ copies of $k$}\label{sec:plane}
Next we consider a special case of colored partitions generated by the product
$$\prod_{k=1}^\infty\frac{1}{\left(1-q^k\right)^{k}}.$$
We reuse the notation $h(n)$ to denote the number of such partitions of $n$.
The notations for $F_k(n)$ and $G_k(n)$ are also reused. 
\begin{Theorem} Consider the set of partitions of $n$ where each part $k$ comes in $k$ colors. Let $F_k(n)$ 
be the number of $k$'s (of any color) appearing in all such partitions of $n$. Let
$G_k(n)$ denote the number of parts that appear at-least $k$ times in  such a partition, summed over all such partitions.  Then, 
for all $n=1, 2, \dots$,
$$F_k(n) = k\big( G_k(n)-G_k(n-k)\big),$$
for $k=1, 2, \dots, n$. 
\end{Theorem}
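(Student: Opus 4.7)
The plan is to compute both $F_k(n)$ and $G_k(n)$ explicitly in terms of the counting function $h$, and then verify the desired identity by a short index shift.

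First, I would apply Theorem~\ref{prefab-freq-matrix} with $b_k = k$ to obtain immediately
\begin{equation*}
F_k(n) \;=\; k\bigl(h(n-k) + h(n-2k) + h(n-3k) + \cdots\bigr).
\end{equation*}
This handles the left-hand side. For the right-hand side, I need an analogous closed form for $G_k(n)$, mimicking the argument already used in \S\ref{sec:colored} for the $b$-colored case but adapted to the fact that part $r$ comes in $r$ distinct colors.

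Next, I would argue that for any fixed $r \geq 1$ and any color $c \in \{1,\dots,r\}$ of the part $r$, the map $\mu \mapsto \mu + (r_c + r_c + \cdots + r_c)$ ($k$ copies of $r_c$) is a bijection between the set of all colored partitions of $n - rk$ and the set of colored partitions of $n$ in which $r_c$ appears at least $k$ times. Hence the latter has cardinality $h(n-rk)$. Summing over the $r$ colors of the part $r$, and then over $r \geq 1$, gives
\begin{equation*}
G_k(n) \;=\; \sum_{r \geq 1} r \, h(n - rk).
\end{equation*}

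Finally, I would compute $G_k(n) - G_k(n-k)$. Writing the second sum with the shift $r \mapsto r-1$,
\begin{equation*}
G_k(n) - G_k(n-k) \;=\; \sum_{r \geq 1} r \, h(n-rk) \;-\; \sum_{r \geq 2} (r-1) h(n-rk) \;=\; \sum_{r \geq 1} h(n-rk),
\end{equation*}
so multiplying by $k$ reproduces exactly the formula for $F_k(n)$ derived above, which proves the theorem.

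I don't expect any serious obstacle: the only subtlety is keeping track of what a ``part'' means in the colored setting, namely that the quantity $g_k(\pi)$ must count colored parts $r_c$ (not just values $r$), so that summing over $r_c$ gives the factor $r$ in the closed form for $G_k(n)$. Once that is set up correctly, the telescoping identity $G_k(n) - G_k(n-k) = \sum_{r\geq 1} h(n-rk)$ is immediate and matches $F_k(n)/k$.
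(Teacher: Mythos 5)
Your proposal is correct and follows essentially the same route as the paper: apply Theorem~\ref{prefab-freq-matrix} with $b_k=k$ for $F_k(n)$, establish $G_k(n)=\sum_{r\ge1} r\,h(n-rk)$ by the add/delete-$k$-copies bijection on colored parts, and telescope. The only difference is that you justify the formula for $G_k(n)$ in more detail (including the correct observation that $g_k$ counts colored parts $r_c$, which is where the factor $r$ comes from), whereas the paper simply states it.
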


\begin{proof}
Observe that
$$G_k(n)=1\cdot h(n-k)+2\cdot h(n-2k)+ 3\cdot h(n-3k)+\cdots$$
for  $k=1, 2, 3, \dots$.

Thus we have 
$$G_k(n)-G_k(n-k) = h(n-k)+h(n-2k)+\cdots,$$
and, by Theorem~\ref{prefab-freq-matrix},
\begin{align*}
F_k(n)&=k \big( h(n-k)+ h(n-2k)+ h(n-3k)+\cdots\big)\\
&= k\big(G_k(n)-G_k(n-k)\big).
\end{align*}
\end{proof}

\section{Overpartitions}\label{sec:overpartitions}
An overpartition of $n$ is a non-increasing sequence of natural numbers whose sum is $n$, where the first occurrence of a  number can be overlined. We denote the number of overpartitions of $n$ by
$\overline{p}(n)$. These can be represented as partitions in two symbols $u$ and $v$. The partitions represented by $u$ are ordinary partitions and the partitions represented by $v$ are distinct. In a sense (to be explained shortly), overpartitions are convolutions of these two types of partitions. One of our theorems in this section shows how one can simply put together the respective results for two partition functions to obtain a new theorem for their convolution. In addition, we give another extension of Stanley's theorem, which also follows by manipulating recurrence relations. Its proof is more intricate than what we have encountered so far.

The overpartitions of $n$ can be formed by adding  partitions of $k$ in $u$ with a distinct partition of $n-k$ in $v$. For example, here is a way to list the overpartitions of $4$. First we list ordinary partitions up to $4$ and then add them with partitions with distinct parts written in reverse order. In Table~\ref{table1} we have listed the partitions of $4$ and partitions with distinct parts in reverse order in $v$.
\begin{table}[h]
$$
\begin{array}{| l | c | | c| | c | }
\hline
m & \parbox{3cm}{Partitions of $m$} & \parbox{3cm}{Partitions of $n-m$ into distinct parts}& \parbox{3cm}{Partitions of $n-m$ into odd parts}\\
\hline 
0 & \text{-} &  v_1+v_3 , v_4 & 4w_1, w_1+w_3\\
\hline
1 & u_1 &  v_1+v_2, v_3 & 3w_1, w_3 \\
\hline
2 & 2u_1, u_2 & v_2 &2w_1\\
\hline
3 & 3u_1,u_1+ u_2, u_3 &  v_1 & w_1\\
\hline
4 & 4u_1,2u_1+ u_2, u_1+u_3, 2u_2, u_4  &\text{-} &\text{-}  \\
\hline
\end{array}
$$
\caption{Generating overpartitions and odd-overlined partitions}\label{table1}
\end{table}
For example, $2u_1+v_2$ represents $\overline{2}+1+1$ and $u_2+v_2$ represents $\overline{2}+2$.
Evidently, 
$$\sum_m p(m)p(n-m \;|\text{ distinct parts}) = \overline{p}(n),$$
a convolution of two sums; thus
the 
generating function of overpartitions is the product of the respective generating functions: 
$$\overline{Q}(q)=
\sum_{n\geq 0} \overline{p}(n)q^n  
=
\prod_{k=1}^\infty \frac{1+q^k}{1-q^k}.$$
It is in this sense we describe overpartitions as convolutions of ordinary partitions with partitions of distinct parts.

Since the number of partitions into distinct parts equals the number of odd partitions, it is natural to consider the partitions formed by adding an ordinary partition of $m$ in $u$ with a partitions of $n-m$ with odd parts in $w$ (see Table~\ref{table1}). These are equinumerous to overpartitions. We call them {\em odd-overlined} partitions. 

Consider colored overlined partitions (of both kinds) generated by the generating functions
$$\sum_{n=0}^\infty h(n)q^n=\prod_{k=1}^\infty \frac{{\big(1+q^k\big)^s}}{\big(1-q^k\big)^r}=
\prod_{k=1}^\infty \frac{1}{\big(1-q^k\big)^r \big(1-q^{2k-1}\big)^s}.$$
Here the ordinary parts are $r$-colored and the distinct/odd parts are colored in $s$ colors. 

%
We mix and match the notations of \S\ref{sec:colored} and \S\ref{sec:odd}. So, for example, $h(n)$ will refer to the number of overpartitions (respectively, odd-overlined partitions), $F_k(n)$ refers to the frequency of $k$ in appearing in ordinary partition (in $r$ colors) contained in the overline partition, and $F^d_k(n)$ and $F^o_k(n)$ are the frequencies of $k$ of the overlined parts which come in $s$ colors.
Similarly, let $G_k(n)$, $G_k^o(n)$ be defined as earlier. Then we have:
\begin{Theorem}  
Let $F_k(n)$, $F_k^o(n)$, $F^d_k(n)$, $G_k(n)$, and $G_k^o(n)$ be as defined above, in the context of colored overpartitions/odd-overlined partitions. 
Then,  for all $n=1, 2, \dots$,
\begin{align*}
F_k(n) & = G_k(n), \text{ for $k=1, 2, 3, \dots $;}  \\
F_k^d(n) &= G_k^o(n)-G_k^o(n-k), \text{ for $k=1, 2, 3, \dots $;}\\ 
F_k^o(n) &= G_k^o(n)+G_k^o(n-k), \text{ for $k=1, 3, 5, \dots $.}\\ 
\end{align*}
\end{Theorem}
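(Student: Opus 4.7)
The key structural fact to exploit is that a colored overpartition of $n$ decomposes uniquely as a pair $(\alpha,\beta)$, where $\alpha$ is an $r$-colored ordinary partition and $\beta$ is an $s$-colored distinct partition with $|\alpha|+|\beta|=n$; similarly, a colored odd-overlined partition decomposes uniquely as $(\alpha,\gamma)$ with $\gamma$ an $s$-colored odd partition. Writing $p_r(m)$ for the number of $r$-colored ordinary partitions of $m$ and $q_s(j)$ for the common count of $s$-colored distinct (equivalently, odd) partitions of $j$, every statistic appearing in the theorem is localized in exactly one of the two components, so it splits as a convolution. For example,
\begin{equation*}
F_k(n)=\sum_{m} F_k^{(r)}(m)\,q_s(n-m),\qquad G_k(n)=\sum_{m} G_k^{(r)}(m)\,q_s(n-m),
\end{equation*}
and analogously for $F_k^d$, $F_k^o$, $G_k^o$, where the $(s)$-superscripted statistic is now convolved against $p_r(m)$.

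With this setup, the three identities become immediate consequences of the componentwise results already proved in \S\ref{sec:colored}--\S\ref{sec:odd}. For the first identity, Theorem~\ref{th:SEF} applied with $b=r$ to the ordinary component gives $F_k^{(r)}(m)=G_k^{(r)}(m)$ for every $m$, and convolving with $q_s$ delivers $F_k(n)=G_k(n)$. For the second identity, Theorem~\ref{th:d-o} applied with $b=s$ to the distinct component gives $F_k^{d,(s)}(j)=G_k^{o,(s)}(j)-G_k^{o,(s)}(j-k)$; convolving with $p_r(m)$ and reindexing the subtracted sum as $\sum_m p_r(m)G_k^{o,(s)}(n-k-m)=G_k^o(n-k)$ yields $F_k^d(n)=G_k^o(n)-G_k^o(n-k)$. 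For the third, the odd-parts theorem of \S\ref{sec:odd} gives $F_k^{o,(s)}(j)=G_k^{o,(s)}(j)+G_k^{o,(s)}(j-k)$ whenever $k$ is odd, and convolving once more against $p_r$ finishes the proof.

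The only subtlety, and the step I would flag as the main bookkeeping hurdle rather than a true obstacle, is that $F_k^d(n)$ and $G_k^o(n)$ in identity two are a priori defined on different partition objects (overpartitions versus odd-overlined partitions). What makes the mixing legitimate is precisely that the count $q_s(j)$ of the overlined component is shared by both models, together with the componentwise identity $F_k^{d,(s)}=G_k^{o,(s)}-G_k^{o,(s)}(\cdot-k)$ already established in \S\ref{sec:odd}. Once this identification is made explicit, the theorem reduces to the statement that each Stanley--Elder--Fine relation of \S\ref{sec:colored}--\S\ref{sec:odd} is preserved under convolution with the partition-counting function of the complementary factor.
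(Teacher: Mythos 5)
Your proposal is correct, and it matches the paper's intent: the paper states this theorem without a written-out proof, but explicitly frames it as showing ``how one can simply put together the respective results for two partition functions to obtain a new theorem for their convolution,'' which is exactly your componentwise-identity-plus-convolution argument. Your explicit handling of the reindexing for $G_k^o(n-k)$ and of the distinct-versus-odd mismatch in the second identity supplies precisely the bookkeeping the paper leaves implicit.
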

\begin{Remark}
One can get analogous theorems for partitions where each part $k$ comes in $k+b$ colors, by combining the considerations of \S\ref{sec:colored} with \S\ref{sec:plane}.
\end{Remark}

Before concluding, we give one more theorem concerning overpartitions, which is of a different nature than those studied above. Consider overpartitions with all parts of a single color generated by $\overline{Q}(q)$. For this theorem, we prefer the imagery of overpartitions where the part in $v$ is overlined. 

Let $F_k(n)$ and $F_d(k)$ be as above. Let 
$$\overline{F}_k(n)=F_k(n)+F_k^d(n).$$
So $\overline{F}_1(n)$ is the number of overpartitions of $n$ with $1$ or $\overline{1}$ as parts. 
Let $\overline{G}_k(n)$ be the number of overpartitions where a part is repeated at-least $k$ times. For example, the overpartition $3+2+2+\overline{2}+1+\overline{1}$ has the part $2$ repeated three times and contributes $1$ to $\overline{G}_k(11)$ for $k=1, 2, 3$. 

Thus $\overline{G}_1(n)$ is the total number of parts $m$ in overpartitions of $n$ such that $m$ or $\overline{m}$ appear at-least once in a partition of $n$, and $\overline{G}_3(n)$ is the number of parts repeated thrice or more. 
An extension of Stanley's theorem (that it, the $b=1$ and $k=1$ case of Theorem~\ref{th:SEF}) to
overpartitions says that  the number of overpartitions of $n$ with $1$ or $\overline{1}$ as a part
is the difference of these two quantities. 
\begin{Theorem} Let $\overline{F}_k(n)$ and $\overline{G}_k(n)$ be as above. Then, for
$n=1, 2, \dots$,
$$\overline{F}_1(n)= \overline{G}_1(n)-\overline{G}_3(n).$$
\end{Theorem}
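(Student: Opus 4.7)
The plan is to prove this identity via generating functions, by computing $\sum_n \overline{F}_1(n)\,q^n$ and $\sum_n \overline{G}_k(n)\,q^n$ in closed form and then comparing the difference at $k=1$ and $k=3$. Let $\overline{Q}(q) = \prod_{k\ge 1}\frac{1+q^k}{1-q^k}$, and regard each overpartition as a pair $(\mu,\nu)$ with $\mu$ an ordinary partition and $\nu$ the overlined (necessarily distinct) parts.

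First I would handle $\overline{F}_1(n) = F_1(n)+F_1^d(n)$. The standard identity $\sum_\mu f_1(\mu)\,q^{|\mu|}=\frac{q}{1-q}\prod_{j\ge 1}\frac{1}{1-q^j}$, together with its distinct-parts analog $\sum_\nu f_1^d(\nu)\,q^{|\nu|}=\frac{q}{1+q}\prod_{j\ge 1}(1+q^j)$, gives (after convolution against the other factor)
$\sum_n \overline{F}_1(n)\,q^n = \left(\frac{q}{1-q}+\frac{q}{1+q}\right)\overline{Q}(q) = \frac{2q}{1-q^2}\,\overline{Q}(q).$

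Next I would compute $\sum_n \overline{G}_k(n)\,q^n$ by grouping according to which value is being counted:
$\overline{G}_k(n) = \sum_{v\ge 1}\#\{\overline\pi\vdash n : v\text{ has total multiplicity}\ge k\text{ in }\overline\pi\}.$
Stripping every $v$-part (overlined or not) from such an overpartition leaves an overpartition in which value $v$ is forbidden, with generating function $\overline{Q}(q)\cdot\frac{1-q^v}{1+q^v}$. Reattaching $m\ge k$ parts of value $v$, noting that at most one of them can be overlined, contributes $2q^{vm}$ for every $m\ge 1$, summing to $\frac{2q^{vk}}{1-q^v}$. The $(1-q^v)$ factors cancel, yielding $\sum_n \overline{G}_k(n)\,q^n = 2\,\overline{Q}(q)\sum_{v\ge 1}\frac{q^{vk}}{1+q^v}$.

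Finally, taking $k=1$ and $k=3$ and using the telescoping identity $\frac{q^v-q^{3v}}{1+q^v}=q^v(1-q^v)$, the difference reduces to $2\,\overline{Q}(q)\sum_{v\ge 1}(q^v - q^{2v}) = 2\,\overline{Q}(q)\bigl(\tfrac{q}{1-q}-\tfrac{q^2}{1-q^2}\bigr) = \tfrac{2q}{1-q^2}\,\overline{Q}(q)$, matching the generating function of $\overline{F}_1(n)$. The main subtlety is the middle step: one must correctly account for the fact that exactly one part of any given value can be overlined, which is what produces the $\frac{1-q^v}{1+q^v}$ in the strip factor and the $\frac{2}{1-q^v}$ in the reattach factor, and it is the cancellation between these that makes the telescoping $q^v-q^{3v}=q^v(1+q^v)(1-q^v)$ collapse the sum so cleanly. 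Everything else is routine manipulation of geometric series.
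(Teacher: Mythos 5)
Your proof is correct. It is, at bottom, the generating-function transcription of the paper's argument: both proofs decompose $\overline{G}_k(n)$ by the value $v$ being counted, strip all parts of that value (observing that at most one of them can be overlined), and then telescope the resulting sum over $v$ against the expansion of $\overline{F}_1(n)$. The paper carries this out at the level of recurrences: it introduces $\overline{O}_m(n)$ and $\overline{T}_m(n)$, derives each as an alternating sum $2\left(\overline{p}(n-km)-\overline{p}(n-(k+1)m)+\cdots\right)$ by iterating a deletion recurrence (with a separate case split between $O_m$ and $O_{\overline{m}}$, and a two-case argument for $\overline{T}_m$); these alternating sums are exactly the coefficient sequences of your $\frac{2q^{vk}}{1+q^v}\,\overline{Q}(q)$. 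What your version buys is uniformity and economy: the single formula $\sum_n \overline{G}_k(n)\,q^n = 2\,\overline{Q}(q)\sum_{v\ge 1} q^{vk}/(1+q^v)$ handles all $k$ at once and replaces the paper's ad hoc recurrences by one strip-and-reattach factorization, while the paper's route keeps the bijective content (which overpartition maps to which) more visible. The subtlety you flag --- that exactly one part of a given value may carry the overline, producing the cancelling factors $\frac{1-q^v}{1+q^v}$ and $\frac{2}{1-q^v}$ --- is precisely the point where the paper needs its $O_{\overline{m}}$ bookkeeping, so the two proofs agree on where the real work lies.
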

\begin{proof}
It is easy to see (by the arguments used to obtain \eqref{recurrence-2} and \eqref{F-distinct-h-b}) that
\begin{align*}
F_1(n) & = \overline{p}(n-1)+\overline{p}(n-2)+\overline{p}(n-2)+\overline{p}(n-3)+\cdots,\\
\intertext{and}
F^d_1(n)& = \overline{p}(n-1)-\overline{p}(n-2)+\overline{p}(n-2)-\overline{p}(n-3)+\cdots,
\end{align*}
so
\begin{equation}\label{F1-overpartitions}
\overline{F}_1(n)= 2\big( \overline{p}(n-1)+\overline{p}(n-3)+\overline{p}(n-5)+\cdots\big) .
\end{equation}

To obtain an expression for the right hand side, we need the following ancillary counting functions.
Let $\overline{p}_m(n)$ be the number of overlined partitions which have $m$ as a part. Note that
\begin{equation}\label{overline1}
\overline{p}_m(n) = \overline{p}(n-m).
\end{equation}
We also need the following functions:
\begin{align*}
\overline{O}_m(n) &:=\parbox{3.25in}{ the number of overpartitions where the part $m$ or $\overline{m}$ appears at-least {\em once};}\\
{O}_m(n) &:=\parbox{3.25in}{ the number of overpartitions where the part $m$ appears at-least once;}\\
{O}_{\overline{m}}(n) &:=\parbox{3.25in}{ the number of overpartitions where the part $m$ does not appear and 
$\overline{m}$ appears;}\\
\overline{T}_m(n) &:=\parbox{3.25in}{ the number of overpartitions where the part $m$ or $\overline{m}$ appears at-least {\em thrice}.}
\end{align*}
Evidently $$\overline{O}_m(n)={O}_m(n)+{O}_{\overline{m}}(n) ;$$
\begin{align*}
\sum_{m=1}^n \overline{O}_m(n) &= \overline{G}_1(n);\\
\intertext{and,}
\sum_{m=1}^n \overline{T}_m(n) &= \overline{G}_3(n).
\end{align*}
To prove the theorem, we find expressions for $\overline{O}_m(n)$ and $\overline{T}_m(n)$
in terms of  $\overline{p}(n)$.

Note that 
\begin{align*}
O_1(n)&=p(n-1) \\ 
O_{\overline{1}}(n) & =\overline{p}(n-1)-\overline{p}_1(n-1)-O_{\overline{1}}(n-1)\\ 
&= \overline{p}(n-1)-2\overline{p}(n-2)+2\overline{p}(n-3)-\overline{p}(n-4)+\cdots .
\end{align*}
The first of these follows because we obtain an overpartition of $n$ with $1$ as a part by adding a $1$ to each overpartition of $n-1$. For $O_{\overline{1}}(n)$ we note that we can add a $\overline{1}$ to each overpartition of $n-1$, which has neither $1$ nor $\overline{1}$ as a part. Finally, the last line follows from \eqref{overline1} and iteration.

From the above, we find that
\begin{equation*}
\overline{O}_1((n)=
2\left(\overline{p}(n-1)-\overline{p}(n-2)+\overline{p}(n-3)-\overline{p}(n-4)+\cdots\right).
\end{equation*}
Similarly, we have, for $m=1, 2, \dots, n$
\begin{equation}\label{O-m}
\overline{O}_m((n)=
2\left(\overline{p}(n-m)-\overline{p}(n-2m)+\overline{p}(n-3m)-\overline{p}(n-4m)+\cdots\right).
\end{equation}

Next, we note that
\begin{align*}
\overline{T}_{1}(n) & =\overline{p}(n-3)+O_{\overline{1}}(n-2)\\
&=
2\left(\overline{p}(n-3)-\overline{p}(n-4)+\overline{p}(n-5)-\overline{p}(n-7)+\cdots\right)  
\end{align*}
The first of these is true because any overpartition where $1$ comes at-least three times is obtained 
by
adding $1+1+1$ to an overpartition of $n-3$ or by adding $1+1$ to an overpartition of $n-2$ which does not have a $1$ but has an $\overline{1}$. The second follows by using the formula for 
$O_{\overline{1}}(n)$ computed above.

Similarly, for $m=1, 2, \dots, n$, 
\begin{equation}\label{T-m}
\overline{T}_m(n)=
2\left(\overline{p}(n-3m)-\overline{p}(n-4m)+\overline{p}(n-5m)-\overline{p}(n-6m)+\cdots\right)
\end{equation}

Finally, we have
\begin{align*}
\overline{G}_1(n)-\overline{G}_3(n) &=
\sum_{m=1}^n \overline{O}_m(n)-\overline{T}_m(n) \\
&= \sum_{m=1}^n 2\big(\overline{p}(n-m)-\overline{p}(n-2m)\big) 
\text{ (using \eqref{O-m} and \eqref{T-m})}\\
&=2\big( \overline{p}(n-1)+\overline{p}(n-3)+\overline{p}(n-5)+\cdots\big)\\
&=\overline{F}_1(n),
\end{align*}
using \eqref{F1-overpartitions}. This completes the proof.
\end{proof}

\section{Closing credits}\label{sec:credits}
The key idea in our extensions of the Stanley--Elder--Fine theorem is the recurrence
\eqref{recurrence-partitions} from which \eqref{recurrence-2} follows.  From here, it is easy to manipulate the expression for $G_k(n)$ corresponding to the choice of $b_k$. Even so, the corresponding theorems  of Andrews and Merca \cite{AM2020} and Gilbert~\cite{Gilbert2015} have  motivated the form of our theorems. In particular, the definition of $H_k(n)$ for ordinary partitions in \cite{AM2020} was very useful. 

As we saw when considering overpartitions, we can mix and match and find Stanley--Elder--Fine theorems for partitions that can be constructed by the convolution of two different kinds of partitions. In addition to overpartitions, many such partitions have appeared in the literature, and this technique can be used to give such theorems of them.

We mention some related work. Banerjee and Dastidar~\cite{BD2020} have given a proof by combinatorial means too, but it is much more intricate than the one given here. Their colored partitions are different from ours; they are closer in spirit to the work in \S~\ref{sec:overpartitions}.
Dastikar and Sen Gupta \cite{DS2013} has given an extension of Stanley's theorem which comes from summing \eqref{G-k} for $k=1, 2, \dots, k$ and noting that the sum equals $F_1(n)$. Their results can be immediately extended (as in this paper) to the context of colored partitions.  Andrews and Deutsch \cite{AD2016} have a different generalization of Elder's theorem. Their starting point and key argument is not far from ours, but they have taken a different path to generalization; see
also \cite{AM2017}. 

The relations \eqref{recurrence-partitions} and \eqref{recurrence-2} have number-theoretic consequences. These are studied by the authors in \cite{BB2021a}.


\end{document}